\numberwithin{equation}{section}
\numberwithin{figure}{section}
\numberwithin{table}{section}
\theoremstyle{plain}
\newtheorem{thm}{\protect\theoremname}[section]
\theoremstyle{definition}
\newtheorem{defn}[thm]{\protect\definitionname}
\theoremstyle{plain}
\newtheorem{lem}[thm]{\protect\lemmaname}
\providecommand{\definitionname}{Definition}
\providecommand{\lemmaname}{Lemma}
\providecommand{\theoremname}{Theorem}
\begin{document}
\title{MULTIDIMENSIONAL CENTRAL SETS THEOREM NEAR ZERO}
\author{Anik Pramanick, Md Mursalim Saikh}
\email{pramanick.anik@gmail.com}
\address{Department of Mathematics, University of Kalyani, Kalyani, Nadia-741235,
West Bengal, India}
\email{mdmsaikh2016@gmail.com}
\address{Department of Mathematics, University of Kalyani, Kalyani, Nadia-741235,
West Bengal, India}
\subjclass[2020]{05D10, 05C55, 22A15, 54D35.}
\keywords{Central Sets, Multidimentional Central Sets Theorem, near zero,
	algebra of Stone-\v{C}ech compactification of descrete semigroup.}
\begin{abstract}
In \cite{key-1} Beiglböck gave a Multidimension Central sets theorem.
Recently, \cite{key-9} extended this result for polynomials. They proved
Multidimensional Polynomial Central sets theorem. Earlier, Hindman
and Leader introduced the near zero concept and proved the Central
sets theorem near 0 in \cite{key-10}. In this article, we generalize the
Multidimensional Central sets theorem for near 0.
\end{abstract}

\maketitle

\section{Introduction}

The concept of Near $0$ was first introduced by Hindman and Leader
in \cite{key-10} in $1999$. They proved the near zero version of
Central sets theorem in that article. Before going to Central sets,
here we first discuss the concept of near $0$. Here in near zero
we study the Ramsey theoretic results in the real interval $\left(0,1\right)$.
Surprisingly, the algebraic structure of $\beta\mathbb{R}_{d}$ helps
to study the Ramsey theoretic results of $\mathbb{R}$ with the usual
topology. Consider the semigroup $\left(\left(0,1\right),.\right)$
and define

\[
0^{+}=\cap_{\epsilon>0}Cl_{\beta\left(0,1\right)_{d}}\left(0,\epsilon\right)
\]

By $\beta\left(0,1\right)_{d}$ we mean Stone-\v{C}ech compactification
of $\left(0,1\right)$ with discrete topology. For detail discussion
about Stone-\v{C}ech compactification of semigroup , readers are
requested to go through \cite{key-12}. 

Now one can show $0^{+}$ is a two sided ideal $(\beta\left(0,1\right)_{d},.)$.
So it contains the smallest ideal of $\left(\beta\left(0,1\right)_{d},.\right)$.
So we can use those known properties of that smallest ideal. Mainly
in this article we will deal with both $\beta S$ and $\beta S_{d}$,
where $S$ is the dense semigroup of $\left(\left(0,\infty\right),+\right)$.

On the other hand, $0^{+}$ is a subsemigroup of $\left(\beta\mathbb{R}_{d},+\right)$but
not an ideal of $\left(\beta\mathbb{R}_{d},+\right)$.

Now we will focus on Central sets theorem. Furstenberg first introduce
Central sets theorem in $1981$. He defined Central sets for $K(\beta\mathbb{N})$
but here we define $Central\text{ }sets$ for an arbitrary semigroup.
\begin{defn}
The set $A$ is $central$ in $S$ if and only if there is some idempotent
$p$ in $K\left(\beta S\right)$ such that $A\in p.$ Here $K\left(\beta S\right)$
is the smallest (two-sided) ideal of $\beta S$. 
\end{defn}

Here we mention the Central sets theorem proposed by him.
\begin{thm}
Let $l\in\mathbb{N}$ and for each $i\in\left[l\right],$ let $\left(y_{i,n}\right)_{n=1}^{\infty}$be
a sequence in $\mathbb{Z}$. Let $C$ be a $central$ subset of $\mathbb{N}.$
Then there exists sequences $\left(a_{n}\right)_{n=1}^{\infty}$in
$\mathbb{N}$ and $\left(H_{n}\right)_{n=1}^{\infty}$ in $\mathcal{P}_{f}\left(\mathbb{N}\right)$
such that

$\left(1\right)$ for all $n,\max H_{n}<\min H_{n+1}$ and

$\left(2\right)$ for all $F\in\mathcal{P}_{f}\left(\mathbb{N}\right)$
and all $i\in\left[l\right],\sum_{n\in F}\left(a_{n}+\sum_{t\in H_{n}}y_{i,t}\right)\in C$.
\end{thm}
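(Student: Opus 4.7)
Let $p \in K(\beta\mathbb{N})$ be a minimal idempotent with $C \in p$; such a $p$ exists by the definition of a central set. For any $A \in p$ set $A^\star = \{x \in A : -x+A \in p\}$; the standard ultrafilter lemma gives $A^\star \in p$ and $-x+A^\star \in p$ for every $x \in A^\star$. The plan is to build $(a_n)$ and $(H_n)$ by induction on $n$, maintaining two invariants: $\max H_n < \min H_{n+1}$ (which gives (1)) and every nonempty partial sum
\[
s_{F,i} \;=\; \sum_{n\in F}\Bigl(a_n + \sum_{t\in H_n} y_{i,t}\Bigr) \qquad (\emptyset\neq F\subseteq[m-1],\ i\in[l])
\]
lying in $C^\star \subseteq C$, which yields (2) in the limit.

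\textbf{Reducing the induction step.} Suppose $a_1,\ldots,a_{m-1}$ and $H_1,\ldots,H_{m-1}$ have been produced. Form
\[
B_m \;=\; C^\star \,\cap\, \bigcap_{\emptyset\neq F\subseteq[m-1]} \,\bigcap_{i\in[l]} \bigl(-s_{F,i}+C^\star\bigr).
\]
By the inductive invariant every $s_{F,i} \in C^\star$, so each translate $-s_{F,i}+C^\star$ lies in $p$; hence the finite intersection $B_m$ lies in $p$. It now suffices to find $a_m\in\mathbb{N}$ and $H_m\in\mathcal{P}_f(\mathbb{N})$ with $\min H_m > \max H_{m-1}$ such that
\[
a_m + \sum_{t\in H_m} y_{i,t} \;\in\; B_m \qquad \text{for every } i\in[l],
\]
since then each new partial sum $s_{F\cup\{m\},i}$ equals $s_{F,i} + a_m + \sum_{t\in H_m} y_{i,t}$, which lies in $-s_{F,i}+B_m \subseteq C^\star$ by construction of $B_m$.

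\textbf{Main obstacle.} The essential difficulty is synchronizing across the $l$ pre-given sequences: a single pair $(a_m,H_m)$ must satisfy $l$ coupled constraints. My tactic is to pick $H_m$ first, write $c_{m,i} = \sum_{t\in H_m} y_{i,t}$, and observe that if every $c_{m,i}$ lies in $B_m^\star$ then $\bigcap_{i\in[l]} (-c_{m,i}+B_m)$ is a finite intersection of members of $p$ and hence in $p$, so $a_m$ can then be chosen from it. The induction step therefore reduces to producing a single finite block $H_m$, with $\min H_m > \max H_{m-1}$, such that the $l$ sums $\sum_{t\in H_m} y_{i,t}$ simultaneously lie in $B_m^\star$. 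This is the technical heart of the argument and the step where a naive translation of Furstenberg's one-sequence proof breaks down. I expect to handle it by lifting $p$ to a minimal idempotent in the product Stone-\v{C}ech compactification $\beta(\mathbb{N}\times\mathbb{Z}^l)$ whose first-coordinate projection is $p$, and to use its absorption property to build $H_m$ one element at a time while preserving the joint $B_m^\star$-membership coordinatewise. Once this is accomplished, the induction closes and conclusions (1) and (2) follow directly from the two invariants.
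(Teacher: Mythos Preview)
The paper does not prove this theorem; it is Furstenberg's classical Central Sets Theorem, merely quoted in the introduction as historical background (with a reference to \cite{key-8}). So there is no ``paper's own proof'' to compare against, and I can only assess your proposal on its merits.

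Your inductive framework and the definition of $B_m$ are standard and correct, and reducing the step to finding a single $a_m$ and $H_m$ with $a_m+\sum_{t\in H_m}y_{i,t}\in B_m$ for all $i\in[l]$ is exactly right. The gap is in how you propose to produce that pair. You want to pick $H_m$ first so that each $c_{m,i}=\sum_{t\in H_m}y_{i,t}$ lies in $B_m^\star$, and then choose $a_m\in\bigcap_i(-c_{m,i}+B_m)$. But $B_m^\star\subseteq\mathbb{N}$, while the sequences $(y_{i,n})$ take values in $\mathbb{Z}$: if, say, $y_{i,n}=-1$ for all $i,n$, then $c_{m,i}=-|H_m|<0$ for every choice of $H_m$, so $c_{m,i}\in B_m^\star$ is simply impossible. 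No amount of lifting to $\beta(\mathbb{N}\times\mathbb{Z}^l)$ repairs this, because the target condition you have written down is vacuous for such sequences. More generally, there is no reason whatsoever for the bare sums $\sum_{t\in H}y_{i,t}$ to land in any prescribed member of $p$.

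The one-step lemma that actually works does not decouple $a_m$ from $H_m$. One shows directly that for $B\in p$ and any $M$ there exist $a\in\mathbb{N}$ and $H\in\mathcal{P}_f(\mathbb{N})$ with $\min H>M$ and $a+\sum_{t\in H}y_{i,t}\in B$ for every $i$. The usual route (see \cite{key-12}, Chapter~14) works in $(\beta\mathbb{Z})^{l}$: one takes an idempotent $q$ in the compact subsemigroup $\bigcap_M\overline{\{(\sum_{t\in H}y_{1,t},\ldots,\sum_{t\in H}y_{l,t}):\min H>M\}}$, and then uses \emph{minimality} of $p$ (hence of the diagonal $\bar p=(p,\ldots,p)$) to get $q+\bar p=\bar p$, which unwinds to the desired statement. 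Minimality enters precisely here, and your $A^\star$ bookkeeping alone---which uses only that $p$ is idempotent---cannot substitute for it; indeed the theorem is false for non-minimal idempotents.
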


After few years in 1990 V. Bergelson and N. Hindman proved a different
but an equivalent version of the central set theorem for commutative
semigroup.
\begin{thm}
Let $\left(S,+\right)$ be a commutative semigroup. Let $l\in\mathbb{N}$
and for each $i\in\left\{ 1,2,...,l\right\} $, let $\left(y_{i,n}\right)_{n=1}^{\infty}$be
a sequence in $S$. Let $C$ be a central subset of $S$. Then there
exist sequences $\left(a_{n}\right)_{n=1}^{\infty}$in $S$ and $\left(H_{n}\right)_{n=1}^{\infty}$
in $\mathcal{P}_{f}\left(\mathbb{N}\right)$ such that

$\left(1\right)$ for all $n,\text{\ensuremath{\max H_{n}}}<\min H_{n+1}$
and 

$\left(2\right)$ for all $F\in\mathcal{P}_{f}\left(\mathbb{N}\right)$
and all $f:F\to\left\{ 1,2,...,l\right\} ,$
\[
\sum_{n\in F}\left(a_{n}+\sum_{t\in H_{n}}y_{f\left(i\right),t}\right)\in C.
\]
\end{thm}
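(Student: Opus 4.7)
The strategy is to exploit the algebra of $(\beta S, +)$. Since $C$ is central, fix a minimal idempotent $p \in K(\beta S)$ with $C \in p$, and recall the standard bookkeeping lemma: for every $A \in p$ the set $A^{\star} := \{s \in A : -s + A \in p\}$ again lies in $p$, and for every $s \in A^{\star}$ one has $-s + A^{\star} \in p$. The sequences $(a_n)$ and $(H_n)$ will be built by induction on $n$, maintaining as inductive invariant that every partial sum
\[
s(F, f) \;=\; \sum_{k \in F}\Bigl(a_{k} + \sum_{t \in H_{k}} y_{f(k),\, t}\Bigr), \qquad \emptyset \neq F \subseteq \{1,\dots,n\},\ f : F \to \{1,\dots,l\},
\]
lies in $C^{\star}$; condition (1) is enforced by demanding $\min H_{n+1} > \max H_n$ at each stage.

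For the inductive step at stage $n + 1$, gather the (finitely many) existing partial sums into a set $M \subseteq C^{\star}$ and form
\[
D \;=\; C^{\star} \,\cap\, \bigcap_{s \in M}\bigl(-s + C^{\star}\bigr).
\]
Because every $s \in M$ lies in $C^{\star}$, each translate $-s + C^{\star}$ is in $p$, and hence $D \in p$. The invariant at stage $n+1$ will be restored once we produce $a_{n+1} \in S$ and a finite $H_{n+1} \subset \mathbb{N}$ with $\min H_{n+1} > \max H_n$ satisfying
\[
a_{n+1} + \sum_{t \in H_{n+1}} y_{i,\, t} \in D \qquad \text{for every } i \in \{1,\dots,l\}.
\]

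The crux, and the step I expect to be the main obstacle, is arranging a single $H_{n+1}$ that serves all $l$ sequences at once. My plan is to first pick $a_{n+1} \in D^{\star}$ so that $E := -a_{n+1} + D^{\star}$ lies in $p$, reducing the task to producing $H_{n+1}$ with $\sum_{t \in H_{n+1}} y_{i,\, t} \in E$ for every $i$. The elements $t_1 < t_2 < \cdots$ of $H_{n+1}$ are then selected one at a time by a secondary recursion: at each inner stage the partial sum for each coordinate $i$ is kept in a designated $p$-set, and the $l$ simultaneous continuation constraints (one per coordinate) are combined into a finite intersection of $p$-sets using the idempotency $p + p = p$ and the $A^{\star}$-lemma, which guarantees that an admissible next index $t_j$ always exists. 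After finitely many inner steps the recursion terminates with the required $H_{n+1}$, and the outer induction proceeds. The base case $n = 1$ is the same argument applied with $M = \emptyset$ and $D = C^{\star}$.
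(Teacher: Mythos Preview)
First, the paper does not give its own proof of this statement: it is quoted as the Bergelson--Hindman form of the Central Sets Theorem (reference \cite{key-2}) and used only as background, so there is nothing in the paper to compare your argument against directly.

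Your outline is correct up to and including the formation of $D=C^{\star}\cap\bigcap_{s\in M}(-s+C^{\star})\in p$, but the ``crux'' step has a real gap. You fix $a_{n+1}\in D^{\star}$ \emph{first} and then try to manufacture $H_{n+1}$ by a secondary recursion so that $\sum_{t\in H_{n+1}}y_{i,t}\in E:=-a_{n+1}+D^{\star}$ for every $i$. This cannot work in general: what you need at each inner step is an \emph{index} $t_j$ for which the $l$ values $y_{1,t_j},\ldots,y_{l,t_j}$ simultaneously lie in prescribed sets, but $p$ is an ultrafilter on $S$, not on the index set, and the sequences $(y_{i,n})$ are arbitrary. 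The $A^{\star}$-lemma and ``finite intersection of $p$-sets'' only guarantee that many \emph{elements of $S$} satisfy a constraint, never that some $y_{i,t}$ does. Concretely, for $S=\mathbb{N}$, $l=1$ and $y_{1,n}=2^{2^{n}}$, the complement of $FS\bigl((2^{2^{n}})\bigr)$ is thick, hence central; for a minimal idempotent $p$ containing it one can take $E\in p$ disjoint from every finite sum $\sum_{t\in H}y_{1,t}$, so after a bad (but permitted) choice of $a_{n+1}$ your reduction is impossible. In short, your plan uses only the idempotency of $p$, whereas the theorem genuinely needs minimality: in the standard proofs $a_{n+1}$ and $H_{n+1}$ are obtained \emph{together} from a lemma (proved via the minimal-left-ideal structure, typically by passing to $\beta(S^{l})$ along the diagonal) stating that for minimal $p$, any $D\in p$ and any $m$, there exist $a\in S$ and $H\in\mathcal{P}_{f}(\mathbb{N})$ with $\min H>m$ and $a+\sum_{t\in H}y_{i,t}\in D$ for all $i\le l$.
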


Later many generalization of this theorem was done. But we are interested
in two of such generalizations, one is by Beiglböck \cite{key-1}
and another by Hindman and Leader \cite{key-10}. Beiglböck extended
the Central sets theorem for Multidimension. Here is the theorem proved
by Beiglböck.
\begin{thm}
Let $(S,\cdot)$ be a commutative semigroup and assume that there
exists a non principal minimal idempotent in $\beta S$. For each
$l\in\mathbb{N}$, let $\langle y_{l,n}\rangle_{n=0}^{\infty}$ be
a sequence in $S$. Let $k,r\geq1$ and let $[S]^{k}=\cup_{i=1}^{r}A_{i}$.
There exist $i\in\left\{ 1,2,\ldots,r\right\} $, a sequence $\left(x_{n}\right)_{n=1}^{\infty}$
in $S$ and a sequence $\alpha_{0}<\alpha_{1}<\ldots$ in $\mathcal{P}_{f}(\omega)$
such that for each $g\in\Phi$,
\[
\left[\text{FP}\left(\left\langle x_{n}\prod_{t\in\alpha_{n}}y_{g(n),t}\right\rangle _{n=0}^{\infty}\right)\right]_{<}^{k}\subseteq A_{i}.
\]
Where
\[
\begin{array}{c}
\left[FP\left(\left\langle x_{n}\prod_{t\in\alpha_{n}}y_{g(n),t}\right\rangle _{n=0}^{\infty}\right)\right]_{<}^{k}=\\
\left\{ \left\{ \prod_{t\in\alpha_{1}}x_{t},\ldots,\prod_{t\in\alpha_{k}}x_{t}\right\} :\alpha_{1}<\ldots<\alpha_{k}\in\mathcal{P}_{f}\left(\omega\right),\omega=\mathbb{N}\cup\left\{ 0\right\} \right\} 
\end{array}
\]
and $\Phi=\left\{ f\text{ }|\text{ }f:\omega\to\omega,\text{ }f\left(n\right)\leq n,\text{ }n\in\omega\right\} .$ 
\end{thm}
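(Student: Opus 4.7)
The plan is to invoke the algebraic machinery of $\beta S$, using the given non-principal minimal idempotent $p \in K(\beta S)$ to drive a Galvin--Glazer-style recursive construction. The first step is to lift $p$ to a $k$-fold tensor ultrafilter $p^{(k)} := p \otimes p \otimes \cdots \otimes p$ on $\beta(S^{k})$. Standard arguments from the algebra of Stone--\v{C}ech compactifications show that $p^{(k)}$ is an idempotent in $K(\beta(S^{k}))$ under coordinatewise multiplication. Non-principality of $p$ guarantees that the diagonal is $p^{(k)}$-null, so modulo a $p^{(k)}$-large set we can identify $[S]^{k}$ with a subset of $S^{k}$ via any fixed linear ordering of $S$. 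The finite partition $[S]^{k} = \bigcup_{i=1}^{r} A_{i}$ then picks out a unique color $i$ with $A_{i} \in p^{(k)}$; this is the $i$ advertised in the conclusion.

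Next, I would run a simultaneous inductive construction of the sequences $(x_{n}) \subseteq S$ and $(\alpha_{n}) \subseteq \mathcal{P}_{f}(\omega)$ with $\max \alpha_{n-1} < \min \alpha_{n}$. The key algebraic tool is the operation $B \mapsto B^{\star} := \{s \in S : s^{-1}B \in p\}$: idempotency of $p$ forces $B^{\star} \in p$ whenever $B \in p$, and moreover $s^{-1}B^{\star} \in p$ for every $s \in B^{\star}$. At stage $n$, having fixed all earlier data, the set of $x_{n}$'s that are compatible with every possible continuation --- meaning that, for every $g \in \Phi$ and every way of forming a $k$-subset of the extended FP-system including the new term $x_{n}\prod_{t \in \alpha_{n}} y_{g(n),t}$, the resulting $k$-tuple lies in $A_{i}$ --- is a $p$-large subset of $S$, precisely because of the $\star$-calculus applied coordinatewise. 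Pick $\alpha_{n}$ well to the right of $\alpha_{n-1}$ and $x_{n}$ from this large set.

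The main obstacle is accommodating the universal quantifier over $g \in \Phi$ together with the $k$-dimensional constraint in a single recursion. What makes the construction feasible is the condition $g(n) \leq n$: at stage $n$ only finitely many values of $g(n)$ (and hence finitely many new sequences $y_{l,\cdot}$) are relevant, so the number of constraints remains finite at every step. Commutativity of $(S,\cdot)$ is essential for freely rearranging the products indexed by $F \in \mathcal{P}_{f}(\omega)$ so that the FP-system closes under the required operations. A careful bookkeeping --- iterating the $\star$-operation enough times at each step, and intersecting with shifts by all previously constructed partial products --- yields a $p^{(k)}$-large constraint set from which $x_{n}$ can always be drawn, and taking the limit of this construction produces the desired sequences with the monochromaticity property.
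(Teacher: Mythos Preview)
The paper does not itself prove this statement; it is quoted from Beiglb\"ock as background. The paper's own contribution is a near-zero analogue (Theorem~3.3), and the proof given there (via Lemmas~3.1 and~3.2) reproduces Beiglb\"ock's original scheme. That scheme differs from your sketch in two essential respects, and the second of these is a genuine gap in your plan.

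First, the paper does not use a tensor ultrafilter. Your assertion that $p^{(k)}=p\otimes\cdots\otimes p$ is a minimal idempotent in $\beta(S^{k})$ under the coordinatewise operation is not a standard fact, and in general the tensor product of idempotents is \emph{not} an idempotent for that operation; the quantifiers in $(p\otimes p)\cdot(p\otimes p)$ and in $(p\cdot p)\otimes(p\cdot p)$ are nested differently. What the paper does instead is to define, by downward induction on $t$, sets
\[
B_{t}(E,i)=\bigl\{\,y\in S\setminus E:B_{t+1}(E\cup\{y\},i)\in p\,\bigr\},\qquad E\in[S]^{t-1},
\]
with $B_{k}(E,i)=\{y:E\cup\{y\}\in A_{i}\}$, and to show by a short pigeonhole induction that $B_{1}(\emptyset,i)\in p$ for some $i$. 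This is the correct substitute for your colour-selection step; it is morally the iterated-quantifier content of $A_{i}\in p^{(k)}$, but packaged so that the $\star$-calculus can be run in $S$ rather than in $S^{k}$. The bookkeeping is then organised as a tree $T\subseteq S^{<\omega}$ with $T(f)\in p$ at every node $f$, built so that along any branch all ordered $k$-tuples of partial products land in $A_{i}$.

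Second, and this is where your argument actually breaks, you never explain how the sequences $\langle y_{l,n}\rangle$ and the sets $\alpha_{n}$ enter the construction. Writing ``pick $\alpha_{n}$ well to the right of $\alpha_{n-1}$ and $x_{n}$ from this large set'' does not force $x_{n}\prod_{t\in\alpha_{n}}y_{l,t}$ into the required set for every $l\le n$: a set merely being in $p$ lets you choose a single element, not an element together with a prescribed finite-products pattern over external sequences. The paper closes this gap by observing that each $T(f)$, being a member of the minimal idempotent $p$, is \emph{central}, and then applying the one-dimensional Central Sets Theorem as a black box at stage $n$ to the finitely many sequences $\langle y_{0,\cdot}\rangle,\ldots,\langle y_{n,\cdot}\rangle$: this is exactly what produces $x_{n}$ and $\alpha_{n}$ simultaneously with $x_{n}\prod_{t\in\alpha_{n}}y_{l,t}\in T(f)$ for all $l\le n$. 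Without invoking the Central Sets Theorem at each inductive step, the recursion you describe cannot be carried out.
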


Recently Goswami and Patra \cite{key-9} extended the Polynomial Central
sets theorem for Multidimensions.On the other hand Hindman and Leader
gave a near zero version of Central sets theorem in \cite{key-10}.
\begin{thm}
\label{CST}Let $\left(S,+\right)$ be dense semigroup of $\left(\left(0,\infty\right),+\right)$
and let $A\subseteq S$ be a central set near $0$ in $S$ and , let
$Y=\left(\ensuremath{\left(y_{i,t}\right)_{t=1}^{\infty}}\right)_{i=1}^{\infty}\in\mathcal{Y}$
.Then there exists sequences $\left(a_{n}\right)_{n=1}^{\infty}$
in $S$ and a sequence $\left(H_{n}\right)_{n=1}^{\infty}$in $\mathcal{P}_{f}\left(\mathbb{N}\right)$
such that

(a) for all $n\in\mathbb{N}$, $a_{n}<\frac{1}{n}$ and $\max\text{ }H_{n}<\min\text{ }H_{n+1}$
and

(b) such that for each $f\in\Phi$, 

\[
\text{\ensuremath{FS\left(\left(a_{n}+\sum_{t\in H_{n}}y_{f\left(n\right),t}\right)_{n=1}^{\infty}\right)\subseteq A}}.
\]
\end{thm}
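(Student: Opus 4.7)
The plan is to follow the algebraic proof of the Bergelson-Hindman Central Sets Theorem, with the single modification needed for the ``near zero'' variant: the witnessing minimal idempotent is forced to live in $0^+$, which provides members of $p$ in every neighbourhood of $0$ and hence the bound $a_n < 1/n$. First, since $A$ is central near $0$, fix a minimal idempotent $p \in K(0^+)$ with $A \in p$. Because $p \in 0^+$, $S \cap (0,\epsilon) \in p$ for every $\epsilon > 0$. Set $A^\star = \{x \in A : -x + A \in p\}$; by the standard $\star$-lemma for idempotents, $A^\star \in p$ and $-x + A^\star \in p$ for every $x \in A^\star$.

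The sequences $(a_n)$ and $(H_n)$ will be built by induction on $n$, maintaining the invariant that for every non-empty $F \subseteq \{1,\ldots,n\}$ and every $f:F \to \mathbb{N}$ with $f(k) \le k$, the partial sum $\sigma_{F,f} := \sum_{k \in F}\bigl(a_k + \sum_{t \in H_k} y_{f(k),t}\bigr)$ lies in $A^\star$. At stage $n$, the finitely many previously built sums all belong to $A^\star$, so
\[
B \;=\; A^\star \,\cap\, \bigcap_\sigma \bigl(-\sigma + A^\star\bigr)
\]
is a finite intersection of members of $p$, hence $B \in p$. It then suffices to find $a_n \in S \cap (0,1/n)$ and $H_n$ with $\min H_n > \max H_{n-1}$ such that $a_n + \sum_{t \in H_n} y_{i,t} \in B$ for every $i \in \{1,\ldots,n\}$, since adding any $\sigma$ from a previous stage keeps the combined sum in $A^\star$.

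The main obstacle is producing a single pair $(a_n,H_n)$ that works simultaneously for the $n$ sequences $(y_{i,t})_t$, $i \le n$. My plan is to nest the $\star$-operation $n$ times starting from $B$, obtaining $B = B^{(0)} \supseteq B^{(1)} \supseteq \cdots \supseteq B^{(n)}$ with each $B^{(j)} \in p$ and with $-x + B^{(j-1)} \in p$ for every $x \in B^{(j)}$. The hypothesis $p \in 0^+$ then supplies $a_n \in S \cap (0,1/n) \cap B^{(n)}$, which is non-empty because $B^{(n)} \cap S \cap (0,1/n) \in p$. Next I run an inner induction of length $n$ through the sequences, producing $H_n = \bigcup_{i=1}^n H_n^{(i)}$ as a disjoint union of finite blocks with $\min H_n^{(1)} > \max H_{n-1}$, chosen so that after the $i$-th round the running partial sum still sits in the appropriate nested $B^{(j)}$. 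This absorption step is the same mechanism as in the classical multi-sequence Central Sets Theorem: the $\star$-nesting guarantees that each new contribution $\sum_{t \in H_n^{(i)}} y_{i,t}$ can be adjoined while keeping membership in $p$-sets, and the translate-hereditary property of $A^\star$ propagates through.

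Once the induction terminates, (a) and (b) follow immediately: $a_n < 1/n$ and $\max H_n < \min H_{n+1}$ are built into the construction, and every $\sigma_{F,f} \in A^\star \subseteq A$ by the invariant. The near-zero ingredient enters at exactly two points, namely the existence of $p$ inside $K(0^+)$ and the size selection of $a_n$ via $p \in 0^+$; every other step is structurally identical to the classical argument.
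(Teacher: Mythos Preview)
The paper does not give its own proof of this statement: Theorem~\ref{CST} is quoted in the introduction as the Hindman--Leader Central Sets Theorem near zero from \cite{key-10} and is then invoked as a black box inside the proof of the main theorem in Section~3. So there is no proof in the paper to compare your argument against.

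Independently of that, your sketch has a genuine gap at the ``inner induction'' step. You correctly isolate the crux: at stage $n$ one must produce a \emph{single} pair $(a_n,H_n)$ with $a_n+\sum_{t\in H_n}y_{i,t}\in B$ for every $i\le n$ simultaneously. But the mechanism you propose---iterate the $\star$-operator $n$ times and assemble $H_n=H_n^{(1)}\cup\cdots\cup H_n^{(n)}$, ``adjoining each new contribution $\sum_{t\in H_n^{(i)}}y_{i,t}$''---does not deliver this. For each fixed $i$ the required sum ranges over the \emph{whole} of $H_n$, not just the $i$-th block; so treating one sequence per block leaves the other $n-1$ constraints unchecked, while treating all $n$ sequences at each block simply reproduces the original simultaneous problem at every inner step. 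The $\star$-nesting is a purely idempotent-level device and cannot by itself force one $H$ to serve all $n$ sequences. In the Hindman--Leader argument the missing ingredients are precisely the minimality of $p$ (which you assume but never use beyond the $\star$-lemma) together with the hypothesis $Y\in\mathcal{Y}$ (which you do not use at all): the convergence condition on each $(y_{i,t})_t$ places the compact subsemigroup $E_i=\bigcap_m\overline{FS\bigl((y_{i,t})_{t\ge m}\bigr)}$ inside $0^{+}$, and minimality in the commutative setting then gives $p\in E_i+p$, which is exactly what allows a single $H$ to work for all $i$ at once. Your outline supplies neither piece, so the central step is unsupported.
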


In this article, We will give the Multidimension version of this theorem.

\section{Preliminaries}

An Ultrafilter $p$ in $S$ is a non empty collection of subset of
$S$ satisfying the following conditions

(i) $\phi\notin p.$

(ii) If $A\in p$ then for $A\subseteq B,B\in p$

(iii) If $A,B\in p\text{ then }A\cap B\in p$

(iv) For $k\in\mathbb{N}$ and $S=\cup_{i=1}^{k}A_{i}$then there
exists $i\in\left\{ 1,2,...,k\right\} $such that $A_{i}\in p.$ 

The set of all ultrafilters on $S$ where $\left(S,\cdot\right)$
is a discrete semigroup is denoted by $\beta S$. Then $\left\{ \overline{A}:A\subseteq S\right\} $,
where $\overline{A}=\left\{ p\in\beta S:A\in p\right\} $ forms a
closed basis for the toplogy on $\beta S$. With this topology $\beta S$
becomes a compact Hausdorff space in which $S$ is dense, called the
Stone-\v{C}ech compactification of $S$. The operation of $S$ can
be extended to $\beta S$ making $\left(\beta S,\cdot\right)$ a compact,
right topological semigroup with $S$ contained in its topological
center. That is, for all $p\in\beta S$ the function $\rho_{p}:\beta S\to\beta S$
is continuous, where $\rho_{p}(q)=q\cdot p$ and for all $x\in S$,
the function $\lambda_{x}:\beta S\to\beta S$ is continuous, where
$\lambda_{x}(q)=x\cdot q$. For $p,q\in\beta S$ and $A\subseteq S$,
$A\in p\cdot q$ if and only if $\left\{ x\in S:x^{-1}A\in q\right\} \in p$,
where $x^{-1}A=\left\{ y\in S:x\cdot y\in A\right\} $. one can see
\cite{key-12} for an elementary introduction to the semigroup $\left(\beta S,\cdot\right)$
and its combinatorial applications. An element $p\in\beta S$ is called
idempotent if $p\cdot p=p$. Here we will work for those dense subsemigroup
of $\left(\left(0,1\right),\cdot\right)$, which are dense subsemigroup
of $\left(\left(0,\infty\right),+\right).$

$0^{+}$ is two sided ideal of $\left(\beta\left(0,1\right)_{d},\cdot\right)$,
so contains the smallest ideal. It is also a subsemigroup of $\left(\beta\mathbb{R}_{d},+\right)$.
As a compact right topological semigroup, $0^{+}$ has a smallest
two sided ideal. Let $K\left(0^{+}\right)$ is the smallest ideal
contained in $0^{+}$ . Central Sets near zero are the elements from
the idempotent in $K\left(0^{+}\right)$. 

\section{Multidimensional Central sets theorem near zero}

With the help of this following two lemmas we prove the main theorem.
Let define some useful notations

(i) By $\left[A\right]^{k}$we denote the collection of all subset
of $A$ with cardinality $k.$

(ii) $A^{*}=\left\{ x\in A:x^{-1}A\in p,\text{ where }p\text{ is an idempotent ultrafilter and }A\in p\right\} .$
\begin{lem}
\label{lemma1} Let $\left(S,+\right)$ be a dense semigroup of $\left(\left(0,\infty\right),+\right)$
and $p\in K$, let $k,r\geq1$ and for $\epsilon>0,\text{ }\left[S\cap\left(0,\epsilon\right)\right]^{k}=\cup_{i=1}^{r}A_{i}$.
For each $i\in\left\{ 1,2,\dots,r\right\} $, each $t\in\left\{ 1,2,\dots,k\right\} $
and each $E\in[S\cap\left(0,\epsilon\right)]^{t-1}$, define $B_{t}\left(E,i\right)$
by downward induction on $t$:
\begin{enumerate}
\item For $E\in[S\cap\left(0,\epsilon\right)]^{k-1},$$\text{ }B_{k}\left(E,i\right)=\left\{ y\in S\cap\left(0,\epsilon\right)\setminus E:E\cup\left\{ y\right\} \in A_{i}\right\} $.
\item For $1\leq t<k$ and $E\in[S\cap\left(0,\epsilon\right)]^{t-1}$,
\[
B_{t}\left(E,i\right)=\left\{ y\in S\cap\left(0,\epsilon\right)\setminus E:B_{t+1}\left(E\cup\left\{ y\right\} ,i\right)\in p\right\} .
\]
\end{enumerate}
Then there exists some $i\in\left\{ 1,2,\dots,r\right\} $ such that
$B_{1}\left(\emptyset,i\right)\in p.$
\end{lem}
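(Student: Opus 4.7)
The plan is to proceed by induction on $k$. The base case $k=1$ is immediate: each $B_1(\emptyset, i)$ equals $\{y \in S \cap (0,\epsilon) : \{y\} \in A_i\}$, so the sets $B_1(\emptyset,1),\ldots,B_1(\emptyset,r)$ cover $S \cap (0,\epsilon)$. Since $p \in K(0^+) \subseteq 0^+$ forces $(0,\epsilon) \in p$ and hence $S \cap (0,\epsilon) \in p$, the ultrafilter property puts some $B_1(\emptyset, i)$ into $p$.

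For the inductive step, my strategy is to reduce the $k$-uniform coloring problem to a $(k-1)$-uniform one by pinning down a single element. For each $y \in S \cap (0,\epsilon)$, I consider the induced $r$-coloring of $[S \cap (0,\epsilon) \setminus \{y\}]^{k-1}$ in which $F$ receives color $i$ iff $F \cup \{y\} \in A_i$. Writing $\widetilde{B}^{(y)}_t(E,i)$ for the sets produced by the lemma's recipe applied to this induced coloring (with $k-1$ in place of $k$), a short downward induction on $t$ establishes the key identity
\[
\widetilde{B}^{(y)}_t(E,i) \;=\; B_{t+1}(E \cup \{y\},\, i)
\]
for every $E \in [S \cap (0,\epsilon) \setminus \{y\}]^{t-1}$. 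Applying the induction hypothesis to this induced coloring then produces, for each $y$, an index $i(y) \in \{1,\dots,r\}$ with $\widetilde{B}^{(y)}_1(\emptyset, i(y)) = B_2(\{y\}, i(y)) \in p$.

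The conclusion follows by a standard pigeonhole. The assignment $y \mapsto i(y)$ partitions $S \cap (0,\epsilon)$ into cells $C_1,\ldots,C_r$, and since $S \cap (0,\epsilon) \in p$ there is some $i_0$ with $C_{i_0} \in p$. By construction every $y \in C_{i_0}$ satisfies $B_2(\{y\}, i_0) \in p$, i.e.\ $y \in B_1(\emptyset, i_0)$; hence $C_{i_0} \subseteq B_1(\emptyset, i_0)$ and $B_1(\emptyset, i_0) \in p$.

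The main technical point is establishing the identification $\widetilde{B}^{(y)}_t(E,i) = B_{t+1}(E \cup \{y\}, i)$ cleanly, since the downward recursion interleaves the two bookkeeping operations ``adjoin $y$'' and ``adjoin a fresh $z$''; however, both sides agree at the base $t = k-1$, and the recursive step is preserved by the commutativity $(E \cup \{y\}) \cup \{z\} = (E \cup \{z\}) \cup \{y\}$. Everything else is routine ultrafilter-pigeonhole, with the only ambient fact being that $S \cap (0,\epsilon) \in p$ for every $p \in 0^+$.
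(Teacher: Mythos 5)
Your proof is correct, but it follows a genuinely different route from the paper's. The paper runs a single downward induction on the level $t$ of the hierarchy for the fixed $k$: for each finite set $E$, the sets $B_t(E,1),\dots,B_t(E,r)$ together with $E$ cover $S\cap(0,\epsilon)$, and since $S\cap(0,\epsilon)\in p$ while no finite set belongs to $p$ (as $p\in K(0^{+})\subseteq 0^{+}$ is non-principal), some $B_t(E,i)$ lies in $p$; iterating this from $t=k$ down to $t=1$ with $E=\emptyset$ gives the conclusion directly, with no appeal to smaller values of $k$. You instead induct on $k$: fix $y$, pass to the induced $(k-1)$-uniform colouring $F\mapsto i$ iff $F\cup\{y\}\in A_i$, verify the identification $\widetilde B^{(y)}_t(E,i)=B_{t+1}(E\cup\{y\},i)$ by downward induction, and finish by pigeonholing the colour $i(y)$ over $S\cap(0,\epsilon)\in p$; all of these steps check out. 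The one point you should make explicit is that your induction hypothesis is invoked for the ground set $S\cap(0,\epsilon)\setminus\{y\}$, which is not literally of the form $S\cap(0,\epsilon')$ appearing in the statement; this is harmless because the proof only uses that the ground set is a member of $p$, and $S\cap(0,\epsilon)\setminus\{y\}\in p$ by non-principality (alternatively, apply the hypothesis with $\epsilon'=y$, noting that shrinking the ground set only shrinks each $\widetilde B^{(y)}_t(E,i)$, and containment in a member of $p$ is all you need in the final step). In terms of trade-offs, the paper's argument is shorter and avoids the re-indexing bookkeeping, while your reduction makes transparent the self-similar structure of the $B$-hierarchy --- that fixing one point turns the level-$(t+1)$ sets for $k$ into the level-$t$ sets for $k-1$ --- which is essentially the reason the definition of $B_t(E,i)$ is the right one.
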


\begin{proof}
The proof is by induction. Notice that for each $E\in[S\cap\left(0,\epsilon\right)]^{k-1},$
$S\cap\left(0,\epsilon\right)=E\cup\bigcup_{i=1}^{r}B_{k-1}\left(E,i\right)$
so there exists $i\in\left\{ 1,2,...,r\right\} $ such that $B_{k}\left(E,i\right)\in p$.
Let $E\in[S\cap\left(0,\epsilon\right)]^{k-2}$ and $y\in S\cap\left(0,\epsilon\right)\setminus E$
then there exists $i\in\left\{ 1,2,...,r\right\} $ such that $B_{k}\left(E\cup\left\{ y\right\} ,i\right)\in p$.
Continuing in this process we acheive $S\cap\left(0,\epsilon\right)=\phi\cup\bigcup_{i=1}^{r}B_{1}\left(\phi,i\right)$
which shows that there exists $i\in\left\{ 1,2,...,r\right\} $such
that $B_{1}\left(\phi,i\right)\in p.$
\end{proof}
For further progress we need to define some notations. The main notion
important here is the notion of Tree. In \cite{key-10} Tree is defined
for a set $A.$ According to them $T$ is a tree in $A$ if and only
if $T$ is a set of functions and for each $f\in T$, $domain\left(f\right)\in\omega$
and $range\left(f\right)\subseteq A$ and if $domain\left(f\right)=n>0$,
then $f_{|n-1}\in T$. But here we state equivalently as in \cite{key-1},
let $S$ be a set and $S^{<\omega}=\cup_{n=0}^{\infty}S^{\left\{ 0,1,...,n-1\right\} }$,
define tree by $T\subseteq S^{<\omega},$ and $T\neq\phi,$ for all
$f\in S^{<\omega},\text{ }$ $g\in T,\text{ }dom\text{ }f\subseteq dom\text{ }g,\text{ }g_{\upharpoonright dom\text{ }f}=f,f\in T$
and We will identify $f\in\left(S\right)^{\left\{ 0,1,...,n-1\right\} }$
with the touple $\left(f\left(0\right),f\left(1\right),...,f\left(n-1\right)\right).$
If $s\in\mathcal{P}_{f}\left(S\right)$ then by $f\smallfrown s=\text{\ensuremath{\left(f\left(0\right),f\left(1\right),...,f\left(n-1\right),s\right)}}$
by $T\left(f\right)=\left\{ s\in\mathcal{P}_{f}\left(S\right):f\smallfrown s\in T\right\} .$
\begin{lem}
\label{L} Let $\left(S,+\right)$ be a dense semigroup of $\left(\left(0,\infty\right),+\right)$
and let there exists a non principal minimal idempotent $p$ in $0^{+}$.
Let $k,r\geq1$ and let $[S\cap\left(0,\epsilon\right)]^{k}=\cup_{i=1}^{r}A_{i}$.
Then there exist $i\in\left\{ 1,2,\dots,r\right\} $ and $T\subseteq\left\{ S\cap\left(0,\epsilon\right)\right\} ^{<\omega}$
such that for all $f\in T$, and $\alpha_{1}<\alpha_{2}<\ldots<\alpha_{k}\subseteq domf,\alpha_{i}\in\mathcal{P}_{f}\left(\omega\right)$
one has:
\begin{enumerate}
\item $T\left(f\right)\in p$.
\item $\left\{ \sum_{t\in\alpha_{1}}f\left(t\right),\sum_{t\in\alpha_{2}}f\left(t\right),\ldots,\sum_{t\in\alpha_{k}}f\left(t\right)\right\} \in A_{i}$.
\end{enumerate}
\end{lem}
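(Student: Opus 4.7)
I would begin by invoking Lemma \ref{lemma1} with the given non-principal minimal idempotent $p \in 0^+$ (which sits in $K(0^+)$, so the hypothesis of Lemma \ref{lemma1} is satisfied) to obtain an index $i \in \{1, \ldots, r\}$ such that $B_1(\emptyset, i) \in p$. The tree $T$ is then built recursively: $\emptyset \in T$, and at each stage I declare a set $T(f) \in p$ of legal extensions, putting $f \smallfrown s \in T$ precisely for $s \in T(f)$. Writing $E_f(\alpha_1, \ldots, \alpha_{t-1}) := \{\sum_{s \in \alpha_j} f(s) : 1 \leq j \leq t-1\}$ for brevity, the construction will preserve two invariants on every $f \in T$: (I) for each $1 \leq t \leq k$ and each chain $\alpha_1 < \cdots < \alpha_{t-1}$ of nonempty subsets of $\mathrm{dom}(f)$ one has $B_t(E_f(\alpha_1, \ldots, \alpha_{t-1}), i) \in p$; and (II) for each $1 \leq t \leq k$ and each chain $\alpha_1 < \cdots < \alpha_t$ in $\mathrm{dom}(f)$, the sum $\sum_{s \in \alpha_t} f(s)$ lies in the starred set $B_t(E_f(\alpha_1, \ldots, \alpha_{t-1}), i)^*$, where $X^* := \{x \in X : -x + X \in p\}$ is the standard starred subset associated to $X \in p$ by the idempotent $p$. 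The base case $f = \emptyset$ is immediate: (II) is vacuous and (I) reduces to the single instance $B_1(\emptyset, i) \in p$ supplied by Lemma \ref{lemma1}. Specialising (II) to $t = k$ and unwinding $B_k(E, i) = \{y : E \cup \{y\} \in A_i\}$ then yields precisely conclusion (2) of the lemma.

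For the inductive step, assume $f \in T$ with $\mathrm{dom}(f) = \{0, \ldots, n-1\}$ satisfies (I) and (II); since $n$ is the largest index in $\mathrm{dom}(f \smallfrown s)$, the only ``new'' constraints needed to promote $f \smallfrown s$ into $T$ arise from chains whose top set contains $n$. Separating according to whether the non-$n$ part $\beta$ of that top set is empty, each new constraint on $s$ takes one of the four shapes $s \in B_{t'}(E_0, i)$, $s \in B_{t'}(E_0, i)^*$, $s \in -\sigma + B_{t'}(E_0, i)$, or $s \in -\sigma + B_{t'}(E_0, i)^*$, with $\sigma := \sum_{s' \in \beta} f(s')$ and $E_0 = E_f(\alpha_1, \ldots, \alpha_{t'-1})$ for a suitable sub-chain. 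The ``pure'' shapes (the first two) define sets in $p$ at once from (I), together with the standard fact $X \in p \Rightarrow X^* \in p$ that follows from $p + p = p$. For the two translated shapes, I would apply invariant (II) to the chain $\alpha_1 < \cdots < \alpha_{t'-1} < \beta$ to obtain $\sigma \in B_{t'}(E_0, i)^*$, and then invoke the identity $-\sigma + X^* \in p$ whenever $\sigma \in X^*$ (equivalently, $(X^*)^* = X^*$, again a consequence of $p + p = p$) to place each translated set in $p$ as well. Defining $T(f)$ to be the intersection of all of these finitely many sets together with $S \cap (0, \epsilon/2^{n+1})$, which is in $p$ because $p \in 0^+$, one obtains $T(f) \in p$, and every $s \in T(f)$ promotes $f$ to a node $f \smallfrown s$ that still satisfies both invariants.

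The main obstacle is choosing the correct strengthened invariant (II) using the \emph{starred} sets $B_t(E, i)^*$ rather than $B_t(E, i)$ themselves: the weaker version would leave the translated conditions $-\sigma + B_t(E, i)$ just barely outside $p$, and the induction would collapse. The near-zero hypothesis $p \in 0^+$ is used only through the extra factor $S \cap (0, \epsilon/2^{n+1})$ in the definition of $T(f)$; that factor forces the chosen value $f(n)$ to decay geometrically, so that every subset sum $\sum_{s \in \alpha_j} f(s)$ encountered in the induction remains in $(0, \epsilon)$ and the $k$ sums associated to any chain $\alpha_1 < \cdots < \alpha_k$ are pairwise distinct, keeping the sets $B_t(E, i)$ and the ambient space $[S \cap (0, \epsilon)]^k$ well defined throughout the construction.
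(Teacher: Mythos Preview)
Your proposal is correct and follows essentially the same strategy as the paper: invoke Lemma~\ref{lemma1} to select the colour $i$, then build the tree inductively while maintaining the starred invariant $\sum_{s\in\alpha_t} f(s)\in B_t\bigl(E_f(\alpha_1,\ldots,\alpha_{t-1}),i\bigr)^*$ for every admissible chain, using the identity $(X^*)^*=X^*$ to handle the translated constraints at the inductive step. The only notable addition on your side is the extra factor $S\cap(0,\epsilon/2^{n+1})$ forcing geometric decay, which makes the well-definedness issues (subset sums staying below $\epsilon$ and the $k$ sums being pairwise distinct so that $E\in[S\cap(0,\epsilon)]^{t-1}$) explicit where the paper leaves them tacit.
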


\begin{proof}
By previous lemma \ref{lemma1} we get for $i\in\left\{ 1,2,\dots,r\right\} ,\text{ }B_{1}\left(\emptyset,i\right)\in p$.
Now we construct an increasing sequences of trees $\left(T_{n}\right)_{n=0}^{\infty}$,
satisfying for each $n\geq0,\text{ }T_{n}=\left\{ f_{\upharpoonright\left\{ 1,2,...,n-1\right\} }:f\in T_{n+1}\right\} $
such that for each $f\in T_{n}$ the following holds

$\left(i\right)$ If $dom\text{ }f\subseteq\left\{ 0,1,...,n-2\right\} $
then $T_{n}\left(f\right)\in p.$

$\left(ii\right)$ If $\alpha_{1},\alpha_{2},...,\alpha_{r}\in\mathcal{P}_{f}\left(\omega\right),\text{ }r\in\left\{ 1,2,\dots,k\right\} $
satisfying $\alpha_{1}<\alpha_{2}<\ldots<\alpha_{k}\subseteq domf$
and if $x_{i}=\sum_{n\in\alpha_{i}}f\left(n\right)$ then $x_{r}\in B_{r}\left(\left\{ x_{1},x_{2},...,x_{r-1}\right\} ,i\right)^{*}.$

Here we use mathematical induction. Put $T_{0}=\left\{ \phi\right\} $
and assume now that $T_{0},T_{1},...,T_{n}$ is already defined. Fix
$f\in T_{n}$ with $dom\text{ }f=\left\{ 0,1,...,n-1\right\} .$ For
$\alpha_{1}<\alpha_{2}<\ldots<\alpha_{k}\subseteq domf$, let $x_{i}=\sum_{n\in\alpha_{i}}f\left(n\right)$.
By assumption $x_{r}\in B_{r}\left(\left\{ x_{1},x_{2},...,x_{r-1}\right\} ,i\right)$
and thus 
\[
B_{r+1}\left(\left\{ x_{1},x_{2},...,x_{r}\right\} ,i\right)\in p
\]
 for $\text{ }r\in\left\{ 1,2,\dots,k-1\right\} .$ Since $x_{r}\in B_{r}\left(\left\{ x_{1},x_{2},...,x_{r-1}\right\} ,i\right)^{*}$
we have 
\[
x_{r}^{-1}B_{r}\left(\left\{ x_{1},x_{2},...,x_{r-1}\right\} ,i\right)^{*}\in p
\]
 for $\text{ }r\in\left\{ 1,2,\dots,k\right\} $ such that indeed
$T_{n}\left(f\right)\in p.$ Using this put 
\[
T_{n+1}=T_{n}\cup\left\{ f\smallfrown t:f\in T_{n},\text{ }dom\text{\,}f=\left\{ 0,1,...,n-1\right\} ,\text{ }t\in T_{n}\left(f\right)\right\} .
\]
 It is not hard to verify that this implies that the inductive construction
can be continued: This is only interesting for $dom\text{ }f=\left\{ 0,1,...,n\right\} $
and 
\[
n\in\alpha_{r}\text{ }\left(where\text{ }r\in\left\{ 1,2,\dots,k\right\} \right).
\]
 Fix $f^{'}:\left\{ 0,1,...,n-1\right\} \to S$ such that $f^{'\smallfrown}f\left(n\right)=f.$
If $\alpha_{r}=\left\{ n\right\} ,$$x_{r}=f\left(n\right)\in T_{n}\left(f^{'}\right)\subseteq B_{r}\left(\left\{ x_{1},x_{2},...,x_{r-1}\right\} ,i\right)^{*}$
so we are done. If $\alpha_{r}=\alpha_{r}^{'}\cup\left\{ n\right\} $for
some non empty $\alpha_{r}^{'}\subseteq\left\{ 0,1,...n-1\right\} $
we have 
\[
f\left(n\right)\in T_{n}\left(f^{'}\right)\subseteq\left(\sum_{t\in\alpha_{r}^{'}}f'\left(t\right)\right)^{-1}B_{r}\left(\left\{ x_{1},x_{2},...,x_{r-1}\right\} ,i\right)^{*}
\]
and this implies $x_{r}=\sum_{t\in\alpha_{r}}f\left(t\right)\in B_{r}\left(\left\{ x_{1},x_{2},...,x_{r-1}\right\} ,i\right)^{*}$.
Finally put $T=\cup_{n=0}^{\infty}T_{n}$. Obviously $T\left(f\right)\in p$
for all $f\in T.$ Since 
\[
\sum_{n\in\alpha_{k}}f\left(n\right)\in B_{k}\left(\left\{ \sum_{t\in\alpha_{1}}f\left(t\right),\sum_{t\in\alpha_{2}}f\left(t\right),\ldots,\sum_{t\in\alpha_{k-1}}f\left(t\right)\right\} ,i\right)
\]
 for all $f\in T$ and $\alpha_{1}<\alpha_{2}<\ldots<\alpha_{k}\subseteq domf$
we see that (ii) holds.
\end{proof}
Now we proof the main result of this article.
\begin{thm}
$\left(S,+\right)$ be a dense semigroup of $\left(\left(0,\infty\right),+\right)$
and let there exists a non principal minimal idempotent $p$ in $0^{+}$and
for $\epsilon>0\text{, }\left[S\cap\left(0,\epsilon\right)\right]^{k}=\cup_{i=1}^{r}A_{i}.$For
each $l\in\mathbb{N},$let $\left(y_{l,n}\right)_{n=0}^{\infty}$
be a sequence in $S$. There exist $i\in\left\{ 1,2,\dots,r\right\} $
, a sequence $\left(a_{n}\right)_{n=0}^{\infty}$ in $S$ with $a_{n}\to0$
and $\alpha_{0}<\alpha_{1}<...$ in $\mathcal{P}_{f}\left(\omega\right)$
such that for each $g\in\Phi,$

\[
\text{\ensuremath{\left[FS\left(\left(a_{n}+\sum_{t\in\alpha_{n}}y_{g\left(n\right),t}\right)_{n=0}^{\infty}\right)\right]}}_{<}^{k}\subseteq A_{i}
\]
\end{thm}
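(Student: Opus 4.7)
The plan is to use Lemma \ref{L} as the backbone: it supplies an index $i \in \{1, \ldots, r\}$ and a tree $T \subseteq (S \cap (0,\epsilon))^{<\omega}$ such that $T(f) \in p$ for every $f \in T$ and every increasing $k$-tuple of partial sums along any branch of $T$ lies in $A_i$. Reducing the theorem to this lemma then amounts to producing $(a_n)_{n=0}^{\infty}$ in $S$ with $a_n \to 0$ and $\alpha_0 < \alpha_1 < \cdots$ in $\mathcal{P}_f(\omega)$ such that, for \emph{every} $g \in \Phi$, the interleaved sequence $b_n^g := a_n + \sum_{t \in \alpha_n} y_{g(n),t}$ traces out an infinite branch of $T$. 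If this is arranged, any increasing $k$-subset of $FS\bigl((b_n^g)_{n=0}^{\infty}\bigr)$ is an increasing $k$-tuple of partial sums along the branch for $g$, and so belongs to $A_i$ by clause $(2)$ of Lemma \ref{L}.

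To build the two sequences I will induct on $n$ and preserve the invariant that for every $g' \in \Phi$ with $\mathrm{dom}\, g' = \{0, \ldots, n\}$ the tuple $(b_0^{g'}, \ldots, b_n^{g'})$ belongs to $T$. At stage $n$, the set of restrictions of elements of $\Phi$ to $\{0, \ldots, n-1\}$ is finite, and for each such $g'$ the invariant provides $f_{g'} := (b_0^{g'}, \ldots, b_{n-1}^{g'}) \in T$, whence $T(f_{g'}) \in p$ by Lemma \ref{L}(1). Since $p \in 0^+$, the set $S \cap (0, 1/(n+1))$ also lies in $p$, so
\[
C_n := \bigcap_{g'} T(f_{g'}) \cap \bigl(S \cap (0, 1/(n+1))\bigr) \in p.
\]
The inductive step reduces to producing $a_n \in S$ and $\alpha_n \in \mathcal{P}_f(\omega)$ with $\min \alpha_n > \max \alpha_{n-1}$ such that $a_n + \sum_{t \in \alpha_n} y_{l, t} \in C_n$ holds simultaneously for every $l \in \{0, 1, \ldots, n\}$.

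To realise this simultaneous containment, the plan is to iterate the idempotent relation $C^{*} := \{x \in C : -x + C \in p\} \in p$ in a Galvin--Glazer style selection: starting from $a_n \in C_n^{*}$ (which automatically satisfies the near-zero requirement $a_n < 1/(n+1)$), the indices forming $\alpha_n$ are added one at a time, each chosen beyond $\max \alpha_{n-1}$ from a set in $p$ obtained by intersecting $C_n^{*}$ with translates of $C_n$ by $a_n$ and by the previously committed summands $y_{l, t}$, taken over the finitely many labels $l \in \{0, \ldots, n\}$. This keeps the running partial sum $a_n + \sum_{t} y_{l, t}$ inside $C_n$ for every admissible $l$ at each sub-step, and closing off the selection after finitely many rounds delivers the required $\alpha_n$.

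The main obstacle is precisely the simultaneous compatibility in that inner selection: a single finite set $\alpha_n$ paired with a single scalar $a_n$ must force $a_n + \sum_{t \in \alpha_n} y_{l, t}$ into $C_n$ for every label $l \in \{0, \ldots, n\}$ at once. Its resolution leans on three ingredients acting in concert: the finiteness of the admissible label set at every stage, the finite intersection property of the idempotent $p$, and the near-zero membership $S \cap (0, 1/(n+1)) \in p$ supplied by $p \in 0^+$. Once the induction is carried through, the invariants give $(b_n^g)_{n=0}^{\infty}$ as a branch of $T$ for every $g \in \Phi$, and the opening observation closes the proof.
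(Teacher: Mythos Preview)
Your outer scaffold is exactly the paper's: fix $i$ and the tree $T$ from Lemma~\ref{L}, then carry the invariant that every finite string $(b_0^{g'},\dots,b_{n-1}^{g'})$ lies in $T$, and at stage $n$ intersect the finitely many successor sets $T(f_{g'})$ (together with a shrinking interval so that $a_n\to 0$) to obtain a member of $p$. The paper does precisely this and calls the resulting set $G_n$.

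The gap is in your inner step, where you try to manufacture $a_n$ and $\alpha_n$ with $a_n+\sum_{t\in\alpha_n}y_{l,t}\in C_n$ for every $l\le n$ by a bare Galvin--Glazer iteration on $C_n^{*}$. That machinery lets you choose \emph{elements of $S$} from sets in $p$; it does not let you choose \emph{indices} $t\in\omega$. Once $a_n$ is fixed, you need a single $t$ (or finite set $\alpha_n$) for which the $n+1$ prescribed numbers $y_{0,t},\dots,y_{n,t}$ simultaneously land in the appropriate translates of $C_n^{*}$; nothing in the idempotent relation $C_n^{*}\in p$ forces this, because $p$ says nothing about which indices $t$ realise membership. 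Put differently, your argument uses only that $p$ is idempotent, and the conclusion you are aiming for is exactly the one-step Central Sets Theorem, which is known to fail for non-minimal idempotents. Minimality of $p$ must enter somewhere, and in your sketch it never does.

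The paper resolves this by not reproving that step at all: at stage $n$ it simply invokes Theorem~\ref{CST} (the Hindman--Leader Central Sets Theorem near~$0$) applied to the central-near-zero set $G_n$ and the finitely many tail sequences $(y_{0,t})_{t>m},\dots,(y_{n,t})_{t>m}$ with $m=\max\alpha_{n-1}$, reading off from its conclusion a single $a_n$ and a single $H_n=\alpha_n$ beyond $m$ with $a_n+\sum_{t\in\alpha_n}y_{l,t}\in G_n$ for every $l\le n$. So either invoke Theorem~\ref{CST} as the paper does, or replace your Galvin--Glazer paragraph by a genuine proof of that one-step statement---which will require the minimality of $p$ (e.g.\ via the closed subsemigroup $\bigcap_m\overline{FS((y_{l,t})_{t\ge m})}$ meeting the minimal left ideal containing $p$), not just the $*$-operation.
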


\begin{proof}
First of all fix a minimal idempotent $p$ in $0^{+}$. Let $i\in\left\{ 1,2,\dots,l\right\} $
and for $\epsilon>0,\text{ }T\subseteq\left\{ S\cap\left(0,\epsilon\right)\right\} ^{<\omega}$
be as provided by lemma \ref{L}. We will inductively construct sequences
$\left(a_{n}\right)_{n=1}^{\infty}$ in $S$ and $\alpha_{0}<\alpha_{1}<...in\text{ }\mathcal{P}_{f}\left(\omega\right)$
such that for all $n\in\mathbb{N}$ and all $g\in\Phi$
\[
\left(\begin{array}{c}
a_{0}+\sum_{t\in\alpha_{0}}y_{g\left(0\right),t},a_{1}+\sum_{t\in\alpha_{1}}y_{g\left(1\right),t},\\
\ldots,a_{n-1}+\sum_{t\in\alpha_{n-1}}y_{g\left(n-1\right),t}
\end{array}\right)\in T..........................................\left(i\right)
\]

By the properties of $T$ this is sufficient to proof the theorem.

Assume that $a_{0},a_{1},...,a_{n-1}\in S$ and $\alpha_{0}<\alpha_{1}<...<\alpha_{n-1}\in\mathcal{P}_{f}\left(\omega\right)$
have already been constructed such that $\left(i\right)$ is true
for all $g\in\Phi.$ We have
\[
G_{n}=\cap_{g\in\Phi}T\left(\left(a_{0}+\sum_{t\in\alpha_{0}}y_{g\left(0\right),t},a_{1}+\sum_{t\in\alpha_{1}}y_{g\left(1\right),t},...,a_{n-1}+\sum_{t\in\alpha_{n-1}}y_{g\left(n-1\right),t}\right)\right)\in p.
\]
Let $m=\max\text{ }\alpha_{n-1}.$ Applying Theorem \ref{CST} for
$G_{n}$ and the sequences $\left(y_{0,n}\right)_{n=m}^{\infty}$,
$\left(y_{1,n}\right)_{n=m}^{\infty}$,..., $\left(y_{n,k}\right)_{n=m}^{\infty}$we
find $a_{n}\in S$ and $\alpha_{n}\in\mathcal{P}_{f}\left(\omega\right),$
such that 
\[
a_{n}+\sum_{t\in\alpha_{n}}y_{0,t},a_{n}+\sum_{t\in\alpha_{n}}y_{1,t},...,a_{n}+\sum_{t\in\alpha_{n}}y_{n,t}\in G_{n}
\]
 taking $f\left(n\right)=n\text{ for all }n\in\omega$ 

So for all $g\in\Phi,$$\left(a_{0}+\sum_{t\in\alpha_{0}}y_{g\left(0\right),t},a_{1}+\sum_{t\in\alpha_{1}}y_{g\left(1\right),t},...,a_{n}+\sum_{t\in\alpha_{n}}y_{g\left(n\right),t}\right)\in T.$

Now by Lemma \ref{L} the result follows.
\end{proof}
$\vspace{0.3in}$

\textbf{Acknowledgment:}The first author acknowledge the Grant CSIR-UGC
NET fellowship with file No. 09/106(0202)/2020-EMR-I. The second author
acknowledge the support from University Research Scholarship of University
of Kalyani with id-1F-7/URS/Mathematics/2023/S-502. They also grateful
to their supervisor Prof. Dibyendu De for his valuable suggestions.

$\vspace{0.3in}$

\end{document}